\newtheorem{theorem}{Theorem}[section]
\newtheorem{corollary}{Corollary}[section]
\newtheorem{lemma}[theorem]{Lemma}
\newtheorem{rque}{Remark}
\newtheorem{definition}{Definition}[section]
\newtheorem{exple}[theorem]{Example}
\title{A Novel Extension of $S$-Metric Spaces with Application to Solving $n$th-Degree Polynomial Equations}
\author {Nizar Souayah$^{1}$\\
  $^{1}$Department of Natural Sciences, Community College Al-Riyadh,\\  King Saud University, KSA\\
  \\E-mail:    nsouayah@ksu.edu.sa$^{1}$\\
    							}
\date{}
\begin{document}

\maketitle{}

\begin{abstract}
This paper introduces a novel generalization of the classical concept of $S$-metric spaces, referred to as composed $S$-metric spaces. By incorporating a composed function, we impose more general conditions on the triangle inequality, extending the traditional framework. Using this newly defined structure, we establish the existence and uniqueness of fixed points under suitable assumptions. Our results expand upon and generalize several established results in the literature across various types of spaces. To highlight the practical significance  of our findings, we provide an example that demonstrate the applicability of the theoretical results. This examples highlight the versatility and effectiveness of composed $S$-metric spaces in a range of mathematical contexts, particularly in solving $n$th degree polynomial equations.
\end{abstract}

\noindent\textbf{Keywords}: fixed point; Composed $S$-metric space; $S$-metric spaces.\\
\textbf{2010 Mathematics Subject Classification}: 54H25, 47H10.

\section{Introduction and Preliminaries}
Fixed point (FP for short) theory, a cornerstone of modern mathematical analysis, plays a crucial role in a wide range of disciplines, including topology, nonlinear analysis, and optimization \cite{a1,a2,a3,a4,a5,a6,a7,a8,a9}. Results concerning fixed points have profound implications in the study of dynamical systems, differential equations, and even economics. Over the years, FP results in metric spaces have led to fundamental advances, with the classical Banach FP Theorem \cite{banach} being one of the most significant contributions to the field.\\
While classical FP results, such as those in metric spaces and more specialized spaces like Banach and Hilbert spaces, provide a solid foundation, they do not always cover more complex or generalized settings where the distance between points might be more nuanced. This gap has led to the development of generalized metric spaces, such as $S$-metric spaces (for short $S$-$MS$). As an extension of traditional metric spaces, $S$-$MS$ have gained significant attention in recent years due to their ability to provide a more generalized framework for analyzing metric functions. An early investigation into $S$-$MS$ can be found in the work of Sedghi \cite{sedghi}, who first outlined the foundational properties of these spaces and explored their basic structure. 
\begin{definition}\cite{sedghi}
Let $\mathcal{A}$ be a nonempty set. An $S$-metric on $\mathcal{A}$ is a function \\$S_m : \mathcal{A}^3 \longrightarrow [0,\infty)$ that fulfills  the following hypothesis, for all $q,h,z,u \in \mathcal{A}$ :
\begin{itemize}
	\item[(i)]  $S_m(q,h,z) = 0 \mbox{  if and only if   }   q = h = z$.
	\item[(ii)] $S_m(q,h,z) \leq S_m(q,q,u) +S_m(h,h,u) +S_m(z,z,u)$.
\end{itemize}
The pair $(\mathcal{A},S_m)$ is called an $S$-metric space.
\end{definition}
\begin{exple}
Let $\mathcal{A}=\mathbb{R}$ and $||.||$ be a norm on $\mathcal{A}$. Then
\begin{equation*}
S_m(q,h,z) =||q+h-2z||+ ||h-z||
\end{equation*}
is an $S$-metric space on $\mathcal{A}$. 
\end{exple}
Since then, numerous researchers have contributed to the development of this field by producing  several types of generalizations and examining the properties and  studying the existence and uniqueness of FP  \cite{r1,r2,r3,r4,r5,r9,r10,r11,r12}. \\
To set the context, we begin by reviewing some fundamental definitions of some generalizations of $S$-$MS$ pertinent for our work.\\ 
In \cite{r6}, the authors introduced the concept of $S_b$-metric spaces which is a generalization of the $b$-metric spaces.
\begin{definition}\cite{r6}
Let $\mathcal{A}\neq \emptyset$ and let $b\geq 1$ be a given real number. A function  $S_{b}:\mathcal{A}^{3}\rightarrow [0,\infty)$ is said to be $S_b$-metric  if and only if  for all $q,h,z,u \in \mathcal{A}:$ the following conditions hold:
\begin{itemize}
\item[(i)]  $S_{b}(q,h,z)= 0$ if and only if $q=h=z$.
\item[(ii)] $S_{b}(q,h,z)\leq   b[S_{b}(q,q,u)+S_{b}(h,h,u)+S_{b}(z,z,u)]$.
\end{itemize}
The pair $(\mathcal{A},S_{b})$ is called an $S_b$-metric space.
\end{definition}

\begin{exple}
Let $\mathcal{A}\neq \emptyset$  and $card(\mathcal{A})\geq 5$. Suppose $\mathcal{A}=\mathcal{A}_1\cup \mathcal{A}_2$ a partition of $\mathcal{A}$ such that $card(\mathcal{A}_1)\geq 4$. Let $b\geq 1$. Then
\begin{equation*}
 S_b(a,b,c)= \left\{
      \begin{aligned}
       & 0  & \mbox{ if }  & q=h=z=0\\
       & 3b & \mbox{ if }  & (q,h,z)\in \mathcal{A}_1^3\\
       & 1  & \mbox{ if }  & (q,h,z)\notin \mathcal{A}_1^3 \\
      \end{aligned}
    \right.
\end{equation*}
$S_b$ is an $S_b$-metric on $\mathcal{A}$ with coefficient $b\geq 1$ for all $q,h,z\in \mathcal{A}$.
\end{exple}

Motivated by the concept of the  $S_b$-metric spaces, Sedghi et al. \cite{Sp} established the notion of  $S_p$-metric spaces as follows:
 \begin{definition}\cite{Sp}
Let $\mathcal{A}\neq \emptyset$ and  $\Gamma: [0,\infty)\rightarrow [0,\infty)$ be a strictly increasing function and continuous, where $a\leq \Gamma(a)$ for all $a>0$ and $\Gamma(0)=0$.  A function  $S_{p}:\mathcal{A}^{3}\rightarrow [0,\infty)$ is said to be $S_p$-metric  if and only if  for all $q,h,z,u \in \mathcal{A}:$ the following conditions hold:
\begin{itemize}
\item[(i)]  $S_{p}(q,h,z)= 0$ if and only if $q=h=z$.
\item[(iii)] $S_{p}(q,h,z)\leq   \Gamma \Big(S_{p}(q,q,u)+S_{p}(h,h,u)+S_{p}(z,z,u)\Big)$.
\end{itemize}
The pair $(\mathcal{A},S_{p})$ is called a $S_p$-metric space.
\end{definition}

\begin{rque}
Each $S$-$MS$ is an $S_p$-metric space with $\Gamma(a) = a$ and every $S_b$-metric space with parameter $b \geq 1$ is an $S_p$-metric space with $\Gamma(a) = b.a$.
\end{rque}

\begin{exple}
Let $(\mathcal{A},S)$ be an $S$-$MS$ with coefficient $b\geq 1$. Then 
\begin{enumerate}
	\item $S_{p}(q,h,z)=\exp(S(q,h,z))-1$ is an  $S_p$-metric space with $\Gamma(a)=\exp(a)-1$.
	\item $S_{p}(q,h,z)=\exp(S(q,h,z)) \ln(1+S(q,h,z))$ is an  $S_p$-metric space with $\Gamma(a)=\exp(b.a)\ln(1+b.a)$.
\end{enumerate}
\end{exple}

Recently, the concept of  $S$-$MS$ was  expanded to include  controlled $S$-metric-like spaces \cite{r7} by applying a control function   into the triangle inequality, as showed in the following definition:
\begin{definition}\cite{r7}
Consider $\mathcal{A}\neq \emptyset$ and a function $\alpha : \mathcal{A}^2 \rightarrow [1,\infty)$. If a function
$S : \mathcal{A} \rightarrow [0,\infty)$  meets the following conditions for every   $q, h, z,a\in  \mathcal{A}$, then  the pair $(\mathcal{A},S)$ is called a controlled $S$-metric type space.
\begin{itemize}
\item (ii) $S(q, h, z) = 0 \Longleftrightarrow  q = h = z$;
\item(iii) $S(q, h, z)\leq \alpha(q, a)S(q, q, a) + \alpha(h, a)S(h, h, a) + \alpha(z, a)S(z, z, a)$. 
\end{itemize}
\end{definition}

Inspired by this result, we aim to extend the concept of a controlled $S$-metric space to a new metric space called composed $S$-metric spaces  by incorporating composed functions into the right-hand side of the triangle inequality. As a result, some of the findings discussed in the literature will emerge as special cases of the results derived in the composed $S$-metric spaces. Moreover, This generalization allows for a more flexible structure, accommodating a broader class of spaces and functions. The precise definition and formulation of this new concept are provided in the following section. 



\section{Composed $S$-metric spaces}
\begin{definition}\label{def_CS}
Let $\mathcal{A}$ be a non empty set and consider the  non-constant functions $\alpha: [0, \infty) \rightarrow[0, \infty)$. A function $C_S: \mathcal{A}^3 \rightarrow[0, \infty)$ is said to be a  Composed $S$-metric if it satisfies for all $q,h,w,u\in \mathcal{A}$:
\begin{enumerate}
\item\label{con:1} $C_S(q,h,w)=0$ $\Longleftrightarrow$ $q=h=w=0$ $\forall$ $q,h,w \in \mathcal{A}$,
\item $C_S(q,h,w) \leq \alpha\big( C_S(q,q,u)\big)+\alpha\big(C_S(h,h,u)\big)+\alpha\big(C_S(w,w,u)\big)$ .
\end{enumerate}
 The pair $\left(\mathcal{A}, C_S\right)$ is a called  Composed $S$-metric space ($CSM$ for short).
\end{definition}

\begin{definition}\label{def_CS}
Let  $\left(\mathcal{A}, C_S\right)$ be a $CSM$.  $\mathcal{A}$ is said to be symmetric if $C_S(q,q,h)=C_S(h,h,q)$  for all  $q,h\in \mathcal{A}$. 
\end{definition}

\begin{rque}
We observe that every $CSM$ is an $S$-$MS$ when $\alpha(q)=q$, but the reverse is not necessarily true, as demonstrated in Example  \ref{exple1}.
\end{rque}

\begin{exple}\label{exple1}
Let $\mathcal{A}=[1,+\infty)$. Define a function $C_S:\mathcal{A}^3\longrightarrow [0,\infty)$ by
\begin{equation}
C_S(q,h,w)=(q-w)^2+(h-w)^2. \label{smetric}
\end{equation}
Define  the composed function $\alpha:[1,+\infty)\longrightarrow [0,\infty)$ by $\alpha(q)=e^q$.\\
Then, $\left(\mathcal{A}, C_S\right)$ is a $CSM$ but is not an $S$-metric space in the usual sense.
\end{exple}
\begin{proof}
We begin by proving that the function $C_S$ is not an $S$-metric in the usual sense since the triangle inequality is not satisfied.  Indeed, for $q=4$, $h=5$,  $w=1$ and $t=4$ we have $C_S(q,h,w)=(q-w)^2+(h-w)^2=25 >C_S(q,q,t)+ C_S(h,h,t)+C_S(w,w,t)=20$.\\
Verifying the first condition $1.$ of Definition \ref{def_CS} is straightforward. We should prove condition $2$. For all $q,h,w,u\in [1,\infty)$ we have
\begin{equation*}
C_S(q,h,w)= (q-w)^2+(h-w)^2=(q-u+u-w)^2+(h-u+u-w)^2.
\end{equation*}
By using  the classical inequality $(a+b)^2\leq 2(a^2+b^2)$ we obtain
\begin{eqnarray}
C_S(q,h,w) & = & (q-u+u-w)^2+(h-u+u-w)^2 \nonumber \\
           & \leq & 2(q-u)^2+2(u-w)^2+2(h-u)^2+2(w-u)^2\nonumber\\
           & \leq & 2(q-u)^2+2(h-u)^2+4(w-u)^2.\label{ex1}
\end{eqnarray}
Knowing that for all $q\geq 1$, $q\leq e^q$ and $4q\leq e^{2q}$, we get from (\ref{ex1})
\begin{eqnarray}
C_S(q,h,w)   & \leq & e^{2(q-u)^2}+e^{2(h-u)^2}+e^{2(w-u)^2}\nonumber \\
             & = & \alpha\big(2(q-u)^2 \big)+\alpha\big(2(h-u)^2 \big)+\alpha\big(2(w-u)^2 \big)\nonumber \\
						 & = & \alpha\big(C_S(q,q,u) \big)+\alpha\big(C_S(h,h,u) \big)+\alpha\big(C_S(w,w,u) \big).\nonumber
\end{eqnarray}
This validates the triangle inequality.
\end{proof}

\begin{exple}
Suppose $\mathcal{A}=\mathbb{N}$ and let  $C_S:\mathcal{A}^3\longrightarrow [0,\infty)$ defined by
$$
 \left\{
    \begin{array}{ll}
        C_S(c,c,c)=0 & \forall c\in \mathbb{N}, \\
				C_S(c,c,e)=c+e & \forall c,e\in \mathbb{N},\\
        C_S(c,d,e)=2(c+d+e) &   \mbox{ for all distinct } c,d,e\in \mathbb{N}.
    \end{array}
\right.
$$
Define  the composed function $\alpha:[0,+\infty)\longrightarrow [0,\infty)$ by $\alpha(t)=2t+1$.\\
Then, $\left(\mathcal{A}, C_S\right)$ is a $CSM$ but is not an $S$-$MS$.
\end{exple}
\begin{proof}
It is clear that the condition referred to the self distance is valid. Let $c,d,k,t\in \mathbb{N}$, we have 
\begin{eqnarray*}
 \alpha\big(C_S(c,c,t)\big)+\alpha\big(C_S(d,d,t)\big)+\alpha\big(C_S(e,e,t)\big) & = & \alpha(c+t)+\alpha(d+t)+\alpha(e+t) \\
                                      & = & 2(c+t)+1 +2(d+t)+1+2(e+t)+1\\
																			& \geq & 2(c+d+e)=C_S(c,d,e).
\end{eqnarray*} 
Therefore, $(\mathcal{A}, C_S)$ is a $CSM$. However, it is not an $S$-$MS$ in the usual sense. Indeed, the triangle inequality is not satisfied for all $c,d,e,t\in \mathbb{N}$ since 
\begin{eqnarray*}
C_S(c,c,t)+C_S(d,d,t)+C_S(e,e,t) & = & c+d+e+2t \\
                                 & \not\geq 2 & ( c+d+e) =C_S(c,d,e) \mbox{ for } c=d=e=t=1.
\end{eqnarray*}
\end{proof}

\begin{exple}\label{exple3}
Let $\mathcal{A}=[1,+\infty)$. Define a function $C_S:\mathcal{A}^3\longrightarrow [0,\infty)$ by
\begin{equation}
C_S(q,h,w)=|q-w|+|h-w|. \label{eq-ex3}
\end{equation}
Define  the composed function $\alpha:[1,+\infty)\longrightarrow [0,\infty)$ by $\alpha(q)=e^{2q}$.\\
Then, $\left(\mathcal{A}, C_S\right)$ is a $CSM$.
\end{exple}

\begin{definition}
Let $(\mathcal{A},C_S)$ be a $CSM$ and $\{q_{n}\}$ be a sequence in $\mathcal{A}$. Then
\begin{itemize}
\item[(i)]  A sequence $\{q_{n}\}$ is called convergent if and only if there exists $q\in \mathcal{A}$ such that $C_S(q_n,q_n,q)\longrightarrow 0$  as $n\longrightarrow \infty$. That is for each $\epsilon >0$ there exists $n_0\in \mathbb{N}$ such that $\forall n\geq n_0$, $C_S(q_n, q_n, q) < \epsilon$ and we write $\displaystyle \lim_{n\longrightarrow \infty}q_n=q$.
\item[(ii)] A sequence $\{q_{n}\}$ is called a Cauchy sequence if and only if $C_S(q_n,q_n,q_m)\longrightarrow 0$ as $n,m\longrightarrow \infty$. That is for each $\epsilon >0$ there exists $n_0\in \mathbb{N}$ such that $\forall n,m\geq n_0$,
$C_S(q_n, q_n, q_m) < \epsilon$. 
\item[(iii)] $(\mathcal{A},C_S)$ is considered a complete $CSM$ if every Cauchy sequence within it converges.
\end{itemize}
\end{definition}

\begin{lemma}
Let $(\mathcal{A},C_S)$ be a $CSM$ where the composed function $\alpha$ satisfies $\alpha(0)=0$ and $\{q_{n}\}$ be a convergent sequence in $\mathcal{A}$, then the limit is unique.
\end{lemma}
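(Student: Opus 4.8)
The strategy is to show that if $\{q_n\}$ converges to both $q$ and $q'$, then $C_S(q,q,q')=0$, whence $q=q'$ by the identity axiom (condition~\ref{con:1}). So suppose $\lim_{n\to\infty}q_n=q$ and $\lim_{n\to\infty}q_n=q'$, i.e. $C_S(q_n,q_n,q)\to 0$ and $C_S(q_n,q_n,q')\to 0$ as $n\to\infty$.

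First I would apply the composed triangle inequality to the triple $(q,q,q')$ with auxiliary point $u=q_n$, obtaining, for every $n$,
\[
C_S(q,q,q')\ \le\ 2\,\alpha\big(C_S(q,q,q_n)\big)+\alpha\big(C_S(q',q',q_n)\big).
\]
The right-hand side involves the ``reversed'' distances $C_S(q,q,q_n)$ and $C_S(q',q',q_n)$, which are not directly the quantities known to vanish. To bridge this, I would apply the composed triangle inequality a second time, now to the triple $(q,q,q_n)$ with auxiliary point $u=q$: since $C_S(q,q,q)=0$ and $\alpha(0)=0$ by hypothesis, the self-distance terms disappear and we get $C_S(q,q,q_n)\le\alpha\big(C_S(q_n,q_n,q)\big)$, and symmetrically $C_S(q',q',q_n)\le\alpha\big(C_S(q_n,q_n,q')\big)$.

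Combining the two estimates and letting $n\to\infty$: because $C_S(q_n,q_n,q)\to 0$ and $C_S(q_n,q_n,q')\to 0$, the hypothesis $\alpha(0)=0$ (together with the continuity of $\alpha$ at the origin, the mild regularity implicitly needed to pass $\alpha$ through the limit) forces $C_S(q,q,q_n)\to 0$ and $C_S(q',q',q_n)\to 0$, hence $\alpha\big(C_S(q,q,q_n)\big)\to 0$ and $\alpha\big(C_S(q',q',q_n)\big)\to 0$, so the right-hand side of the first displayed inequality tends to $0$. Therefore $C_S(q,q,q')=0$, and by condition~\ref{con:1} this yields $q=q'$, proving uniqueness of the limit.

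The step I expect to be the main obstacle is precisely the asymmetry noted above: convergence is phrased via $C_S(q_n,q_n,q)$, while the triangle inequality applied to $(q,q,q')$ naturally produces $C_S(q,q,q_n)$. The role of the assumption $\alpha(0)=0$ is exactly to make the second application of the triangle inequality collapse the self-distance contributions $\alpha\big(C_S(q,q,q)\big)$, so that the reversed distance is dominated by $\alpha$ of the correct one; without it one is left with a spurious additive term $2\alpha\big(C_S(q,q,q)\big)$ that need not vanish. If one prefers to avoid any continuity assumption on $\alpha$, it suffices to assume $\alpha$ non-decreasing and iterate the bound once more, reaching $C_S(q,q,q')\le 2\alpha\big(\alpha(C_S(q_n,q_n,q))\big)+\alpha\big(\alpha(C_S(q_n,q_n,q'))\big)$ before taking the limit.
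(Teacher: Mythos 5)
Your proof is correct and follows the same core route as the paper's: apply the composed triangle inequality to the triple $(q,q,q')$ with $q_n$ as the auxiliary point and pass to the limit. The difference is that you explicitly repair two gaps that the paper's own proof glosses over. The paper writes $C_S(q_1,q_1,q_2)\leq 2\alpha\big(C_S(q_1,q_1,q_n)\big)+\alpha\big(C_S(q_2,q_2,q_n)\big)$ and immediately concludes the right-hand side tends to $2\alpha(0)+\alpha(0)$, silently assuming both that the reversed distances $C_S(q_i,q_i,q_n)$ tend to $0$ (convergence is only defined via $C_S(q_n,q_n,q_i)\to 0$, and the space is not assumed symmetric) and that $\alpha$ can be passed through the limit. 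Your second application of the triangle inequality to $(q,q,q_n)$ with auxiliary point $q$, which uses $\alpha(0)=0$ to kill the self-distance terms and yields $C_S(q,q,q_n)\leq\alpha\big(C_S(q_n,q_n,q)\big)$, is a genuinely needed bridge for the first of these gaps, not mere pedantry. The one caveat is that the continuity of $\alpha$ at $0$ (or, as you note, monotonicity together with $\lim_{t\to 0^+}\alpha(\alpha(t))=0$) is not among the lemma's stated hypotheses; you are right to flag it as an implicit regularity assumption, and the paper's proof needs it just as much as yours does. Your closing suggestion that monotonicity alone suffices is slightly optimistic --- after iterating you would still need the composite $\alpha\circ\alpha$ to have infimum $0$ near the origin --- but this is a side remark and does not affect the main argument.
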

\begin{proof}
Let $\{q_{n}\}$ be a convergent sequence in $\mathcal{A}$ and suppose that converges to both $q_1$ and $q_2$ (with $q_1\neq q_2$) that is $\displaystyle \lim_{n\longrightarrow \infty}C_S(q_n, q_n, q_1)=0$ and  $\displaystyle \lim_{n\longrightarrow \infty}C_S(q_n, q_n, q_2)=0$.   we have
\begin{equation}
C_S(q_1, q_1, q_2)  \leq   2\alpha\big(C_S(q_1, q_1, q_n) \big) +\alpha\big(C_S(q_2, q_2, q_n) \big). \label{lemma1}
\end{equation}
Taking the limit as $n$ tends to $\infty$ in equation (\ref{lemma1}) we obtain that 
\begin{equation*}
C_S(q_1, q_1, q_2)  \leq  2\alpha(0 ) +\alpha(0)=0. 
\end{equation*}
Then, $C_S(q_1, q_1, q_2)=0$ and $q_1=q_2$.
\end{proof}

In the following section we  explore several FP theorems and an application within this extended framework.

\section{Fixed Point results}
The first main result presented in this paper is the establishment of a generalized FP theorem for composed $S$-metric spaces, which builds upon the Banach contraction principle.
\begin{theorem}\label{thm1}
Let $(\mathcal{A},C_S)$ be a complete $CSM$ with non-constant control function  $\alpha:[0,\infty)\longrightarrow [0,\infty)$. Let $F: \mathcal{A} \rightarrow \mathcal{A}$ be a  mapping  such that: 
\begin{equation}
C_S(Fq,Fh,Fw)\leq r C_S(q,h,w) \  \ \forall q,h,w\in \mathcal{A} \mbox{ and } r\in (0,1). \label{banach-contr}
\end{equation}
For $\bold{I}_0\in \mathcal{A}$, define a sequence $\{\bold{I}_n\}$ by $\bold{I}_n=F^n\bold{I}_0$. Suppose that the following conditions are satisfied:
\begin{enumerate}
	\item  $\alpha (ks+t) \leq k\alpha(s)+\alpha(t)$ for all positive number  $s, t$ and $k$.
	\item  $\alpha(0)=0$.
	\item $\displaystyle\lim_{n\rightarrow \infty}\displaystyle\sum_{k=n+3}^{m-2}2^{k-n-1}\alpha^{k-n+1}\Big(r^kC_S(\bold{I}_0,\bold{I}_0,\bold{I}_{1})\Big)+2^{m-n-2}\alpha^{m-n-1}\Big(r^mC_S(\bold{I}_0,\bold{I}_0,\bold{I}_{1})\Big)=0$.
\end{enumerate}
Then, $F$ has a unique FP.
\end{theorem}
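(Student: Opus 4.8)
The plan is to follow the classical Banach iteration scheme adapted to the composed $S$-metric setting. First I would show that the Picard sequence $\{\mathbf{I}_n\}$ with $\mathbf{I}_n = F^n\mathbf{I}_0$ is Cauchy. The starting point is the contraction condition \eqref{banach-contr} applied repeatedly: setting $q=h=\mathbf{I}_{n-1}$, $w=\mathbf{I}_{n-1}$ in a slightly degenerate form, or more precisely writing $C_S(\mathbf{I}_n,\mathbf{I}_n,\mathbf{I}_{n+1}) = C_S(F\mathbf{I}_{n-1},F\mathbf{I}_{n-1},F\mathbf{I}_{n}) \le r\,C_S(\mathbf{I}_{n-1},\mathbf{I}_{n-1},\mathbf{I}_{n})$, so by induction $C_S(\mathbf{I}_n,\mathbf{I}_n,\mathbf{I}_{n+1}) \le r^n C_S(\mathbf{I}_0,\mathbf{I}_0,\mathbf{I}_1)$. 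Then, to bound $C_S(\mathbf{I}_n,\mathbf{I}_n,\mathbf{I}_m)$ for $m>n$, I would iterate the composed triangle inequality from Definition \ref{def_CS}, item 2, choosing the auxiliary point $u$ to peel off one index at a time. Each application introduces a factor of (at most) $2$ from the two equal first slots and one more layer of the control function $\alpha$; this is exactly where the combinatorial expression in hypothesis 3 arises — the coefficients $2^{k-n-1}$ and the nested powers $\alpha^{k-n+1}$ track how many times the triangle inequality and $\alpha$ have been composed. Using subadditivity-type hypothesis 1 ($\alpha(ks+t)\le k\alpha(s)+\alpha(t)$) I would distribute $\alpha$ over the telescoped sum of the $r^k C_S(\mathbf{I}_0,\mathbf{I}_0,\mathbf{I}_1)$ terms, arriving at an upper bound for $C_S(\mathbf{I}_n,\mathbf{I}_n,\mathbf{I}_m)$ that is precisely the quantity assumed to vanish in hypothesis 3. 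Hence $\{\mathbf{I}_n\}$ is Cauchy, and by completeness it converges to some $\mathbf{I}^\ast \in \mathcal{A}$.

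Next I would verify that $\mathbf{I}^\ast$ is a fixed point. Using the composed triangle inequality with auxiliary point $u = \mathbf{I}_n$,
\[
C_S(F\mathbf{I}^\ast, F\mathbf{I}^\ast, \mathbf{I}^\ast) \le 2\alpha\bigl(C_S(F\mathbf{I}^\ast, F\mathbf{I}^\ast, \mathbf{I}_{n+1})\bigr) + \alpha\bigl(C_S(\mathbf{I}^\ast, \mathbf{I}^\ast, \mathbf{I}_{n+1})\bigr),
\]
and since $C_S(F\mathbf{I}^\ast, F\mathbf{I}^\ast, \mathbf{I}_{n+1}) = C_S(F\mathbf{I}^\ast, F\mathbf{I}^\ast, F\mathbf{I}_n) \le r\,C_S(\mathbf{I}^\ast, \mathbf{I}^\ast, \mathbf{I}_n) \to 0$, both arguments tend to $0$; invoking continuity of $\alpha$ at $0$ together with $\alpha(0)=0$ (hypothesis 2) forces $C_S(F\mathbf{I}^\ast, F\mathbf{I}^\ast, \mathbf{I}^\ast) = 0$, and by condition \ref{con:1} this gives $F\mathbf{I}^\ast = \mathbf{I}^\ast$ (up to the convention in Definition \ref{def_CS}).

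For uniqueness, suppose $\mathbf{I}^\ast$ and $\mathbf{J}^\ast$ are both fixed points. Applying \eqref{banach-contr} directly, $C_S(\mathbf{I}^\ast, \mathbf{I}^\ast, \mathbf{J}^\ast) = C_S(F\mathbf{I}^\ast, F\mathbf{I}^\ast, F\mathbf{J}^\ast) \le r\,C_S(\mathbf{I}^\ast, \mathbf{I}^\ast, \mathbf{J}^\ast)$, and since $r\in(0,1)$ this yields $C_S(\mathbf{I}^\ast, \mathbf{I}^\ast, \mathbf{J}^\ast) = 0$, hence $\mathbf{I}^\ast = \mathbf{J}^\ast$.

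The main obstacle I anticipate is the bookkeeping in the Cauchy step: correctly iterating the composed triangle inequality so that the accumulated constants and nested $\alpha$-powers match the exact form in hypothesis 3, and making sure hypothesis 1 is applied in the right direction at each stage (pulling scalar multiples out through $\alpha$ rather than into it). A secondary subtlety is the implicit assumption that $\alpha$ is continuous (at least at $0$) — the statement only says "non-constant", so I would either note that hypotheses 1 and 2 already force enough regularity at $0$, or add continuity of $\alpha$ at $0$ as a tacit requirement, as is standard for this class of spaces.
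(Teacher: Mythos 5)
Your plan follows essentially the same route as the paper's proof: the contraction gives $C_S(\mathbf{I}_n,\mathbf{I}_n,\mathbf{I}_{n+1})\leq r^nC_S(\mathbf{I}_0,\mathbf{I}_0,\mathbf{I}_1)$, iterating the composed triangle inequality produces exactly the weighted sum of nested $\alpha$-powers assumed to vanish in hypothesis 3 (hence Cauchy and convergent by completeness), and the fixed-point and uniqueness steps are carried out with the same triangle-inequality-plus-contraction and $r<1$ arguments. Your remark about needing continuity of $\alpha$ at $0$ when passing to the limit is a legitimate point that the paper glosses over (it takes the limit inside $\alpha$ without comment), and your suggestion that hypotheses 1 and 2 supply the needed regularity is a reasonable way to close that small gap.
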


\begin{proof}
Consider the sequence $\{\bold{I}_n\}\in \mathcal{A}$ defined in the statement of the theorem. We have 
\begin{equation}
C_S(\bold{I}_n,\bold{I}_n,\bold{I}_{n+1})=C_S(F^n\bold{I}_0,F^n\bold{I}_0,F^n\bold{I}_1) \leq r^n C_S(\bold{I}_0,\bold{I}_0,\bold{I}_1)\forall n\in \mathbb{N}.\label{eq1}
\end{equation}
Let $n,m\in\mathbb{N}$ such that $n<m$. Using the triangle inequality we get
\begin{eqnarray*}
C_S(\bold{I}_n,\bold{I}_n,\bold{I}_{m}) & \leq &  \alpha(C_S(\bold{I}_n,\bold{I}_n,\bold{I}_{n+1}))+\alpha(C_S(\bold{I}_n,\bold{I}_n,\bold{I}_{n+1}))+\alpha(C_S(\bold{I}_m,\bold{I}_m,\bold{I}_{n+1}))\\
                    & = &     2\alpha\big(C_S(\bold{I}_n,\bold{I}_n,\bold{I}_{n+1})\big)+\alpha\big(C_S(\bold{I}_m,\bold{I}_m,\bold{I}_{n+1})\big)\\
									  & \leq &  2\alpha(C_S(\bold{I}_n,\bold{I}_n,\bold{I}_{n+1}))+ \alpha\Big(2\alpha(C_S(\bold{I}_m,\bold{I}_m,\bold{I}_{n+2})+\alpha(C_S(\bold{I}_{n+1},\bold{I}_{n+1},\bold{I}_{n+2}))\Big)\\
										& \leq &  2\alpha(C_S(\bold{I}_n,\bold{I}_n,\bold{I}_{n+1}))+ 2\alpha^2\Big(C_S(\bold{I}_m,\bold{I}_m,\bold{I}_{n+2})\Big)+\alpha^2\Big(C_S(\bold{I}_{n+1},\bold{I}_{n+1},\bold{I}_{n+2})\Big)\\
										& \leq &  2\alpha(C_S(\bold{I}_n,\bold{I}_n,\bold{I}_{n+1}))+\alpha^2\Big(C_S(\bold{I}_{n+1},\bold{I}_{n+1},\bold{I}_{n+2})\Big)\\
										& + &   2\alpha^2\Big(2\alpha(C_S(\bold{I}_m,\bold{I}_m,\bold{I}_{n+3})) +\alpha(C_S(\bold{I}_{n+2},\bold{I}_{n+2},\bold{I}_{n+3}))  \Big)\\
										& \leq & 2\alpha(C_S(\bold{I}_n,\bold{I}_n,\bold{I}_{n+1}))+\alpha^2\Big(C_S(\bold{I}_{n+1},\bold{I}_{n+1},\bold{I}_{n+2})\Big)\\
										& + &   2\alpha^3\big(C_S(\bold{I}_{n+2},\bold{I}_{n+2},\bold{I}_{n+3})\big)+2^2\alpha^3\Big(C_S(\bold{I}_m,\bold{I}_m,\bold{I}_{n+3})  \Big)\\
										& \leq & 2\alpha(C_S(\bold{I}_n,\bold{I}_n,\bold{I}_{n+1}))+\alpha^2\Big(C_S(\bold{I}_{n+1},\bold{I}_{n+1},\bold{I}_{n+2})\Big)\\
										& + &   2\alpha^3\big(C_S(\bold{I}_{n+2},\bold{I}_{n+2},\bold{I}_{n+3})\big)+2^2\alpha^3\Big[2\alpha\big(C_S(\bold{I}_m,\bold{I}_m,\bold{I}_{n+4}) \big) \\
										& + &    \alpha(C_S(\bold{I}_{n+3},\bold{I}_{n+3},\bold{I}_{n+4}))\Big]\\
										& \leq & 2\alpha(C_S(\bold{I}_n,\bold{I}_n,\bold{I}_{n+1}))+\alpha^2\Big(C_S(\bold{I}_{n+1},\bold{I}_{n+1},\bold{I}_{n+2})\Big) + 2\alpha^3\big(C_S(\bold{I}_{n+2},\bold{I}_{n+2},\bold{I}_{n+3})\big)\\
										& + & 2^2\alpha^4(C_S(\bold{I}_{n+3},\bold{I}_{n+3},\bold{I}_{n+4}))+2^3\alpha^4\big(C_S(\bold{I}_m,\bold{I}_m,\bold{I}_{n+4}) \big)\\
										& \leq &  2\alpha(C_S(\bold{I}_n,\bold{I}_n,\bold{I}_{n+1}))+\alpha^2\Big(C_S(\bold{I}_{n+1},\bold{I}_{n+1},\bold{I}_{n+2})\Big) + 2\alpha^3\big(C_S(\bold{I}_{n+2},\bold{I}_{n+2},\bold{I}_{n+3})\big)\\
										& + & 2^2\alpha^4(C_S(\bold{I}_{n+3},\bold{I}_{n+3},\bold{I}_{n+4}))+2^3\alpha^4\Big[2\alpha\big(C_S(\bold{I}_m,\bold{I}_m,\bold{I}_{n+5}) \big) \\
										& + & \alpha\big(C_S(\bold{I}_{n+4},\bold{I}_{n+4},\bold{I}_{n+5})\big) \Big]\\
										& \vdots &\\
										& \leq & 2\alpha(C_S(\bold{I}_n,\bold{I}_n,\bold{I}_{n+1}))+\alpha^2\Big(C_S(\bold{I}_{n+1},\bold{I}_{n+1},\bold{I}_{n+2})\Big) + 2\alpha^3\big(C_S(\bold{I}_{n+2},\bold{I}_{n+2},\bold{I}_{n+3})\big)\\ 
										& + & \displaystyle\sum_{k=n+3}^{m-2}2^{k-n-2}\alpha^{k-n+1}\Big(C_S(\bold{I}_k,\bold{I}_k,\bold{I}_{k+1})\Big)+2^{m-n-3}\alpha^{m-n-1}\Big(C_S(\bold{I}_m,\bold{I}_m,\bold{I}_{m-1})\Big).										
\end{eqnarray*}
Using (\ref{eq1}), we obtain
\begin{eqnarray*}
C_S(\bold{I}_n,\bold{I}_n,\bold{I}_{m}) & \leq & 2\alpha(r^nC_S(\bold{I}_0,\bold{I}_0,\bold{I}_{1}))+\alpha^2\Big(r^{n+1}C_S(\bold{I}_0,\bold{I}_0,\bold{I}_{1})\Big) + 2\alpha^3\big(r^{n+2}C_S(\bold{I}_0,\bold{I}_0,\bold{I}_{1})\big)\\ 
										& + & \displaystyle\sum_{k=n+3}^{m-2}2^{k-n-1}\alpha^{k-n+1}\Big(r^kC_S(\bold{I}_0,\bold{I}_0,\bold{I}_{1})\Big)+2^{m-n-2}\alpha^{m-n-1}\Big(r^{m-1}C_S(\bold{I}_0,\bold{I}_0,\bold{I}_{1})\Big).	
\end{eqnarray*}
By condition (3) of the theorem and knowing that $0<r<1$  and $\alpha(0)=0$ we can affirm that $\displaystyle\lim_{n,m\rightarrow \infty}C_S(\bold{I}_n,\bold{I}_n,\bold{I}_{m})=0$. Therefore, $\{ \bold{I}_n\}$ is a Cauchy sequence. As $(\mathcal{A}, C_S)$ is  a complete $CMS$, it follows that $\bold{I}_n\rightarrow \bold{I}\in \mathcal{A}$ that is 
\begin{equation}
\displaystyle\lim_{n\rightarrow \infty}C_S(\bold{I}_n,\bold{I}_n,\bold{I})=0 \label{eq2}.
\end{equation}
We will establish that  $\bold{I}$ is a fixed point of $F$.  By the triangle inequality we have
\begin{eqnarray}
C_S(\bold{I},\bold{I},F\bold{I}) & \leq & 2\alpha(C_S(\bold{I},\bold{I},\bold{I}_{n+1}))+\alpha(C_S(F\bold{I},F\bold{I},\bold{I}_{n+1}))\nonumber \\
                           & = &    2\alpha(C_S(\bold{I},\bold{I},\bold{I}_{n+1}))+\alpha(C_S(F\bold{I},F\bold{I},F\bold{I}_{n}))\nonumber \\
													 & \leq & 2\alpha(C_S(\bold{I},\bold{I},\bold{I}_{n+1}))+\alpha(rC_S(\bold{I},\bold{I},\bold{I}_{n})).\label{eq3}
\end{eqnarray}
Taking the limit of inequality (\ref{eq3}) when $n\longrightarrow \infty$, gives us
\begin{equation*}
C_S(\bold{I},\bold{I},F\bold{I}) \leq 2\alpha(0)+ \alpha(0).
\end{equation*}
Knowing that  $\alpha(0)=0$ we can conclude that  $C_S(\bold{I},\bold{I},F\bold{I})=0$ that is $F\bold{I}=\bold{I}$, therefore $\bold{I}$ is a FP of $F$.\\
Let $\bold{I}_1,\bold{I}_2$ two FP of $F$. we have 
\begin{equation*}
C_S(\bold{I}_1,\bold{I}_1,\bold{I}_2)=C_S(F\bold{I}_1,F\bold{I}_1,F\bold{I}_2) \leq r C_S(\bold{I}_1,\bold{I}_1,\bold{I}_2).
\end{equation*}
Since $r<1$, $C_S(\bold{I}_1,\bold{I}_1,\bold{I}_2)=0$, therefore $\bold{I}_1=\bold{I}_2$.
\end{proof}
 Now, we introduce a family of function to investigate some FP theorems on $CSM$. Let $\mathcal{M}_f$ be the set of all continuous functions $M_f:\mathbb{R}_+^5\longrightarrow \mathbb{R}_+$ meeting the following conditions for some $r\in [0,1)$:
\begin{itemize}
	\item[$(M_1)$] $\forall$ $o,h,w\in \mathbb{R}_+$, if $h\leq M_f(o,o,0,w,h)$ with $w\leq 2o+h$, then $h\leq r.o$,
	\item[$(M_2)$] $\forall$ $h\in \mathbb{R}_+$, if $h\leq M_f(h,0,h,h,0)$ then $h=0$.
\end{itemize}

\begin{theorem}\label{thm2}
Let $(\mathcal{A},C_S)$ be a  symmetric complete $CSM$ with non-constant control function  $\alpha:[0,\infty)\longrightarrow [0,\infty)$. Let $F: \mathcal{A} \rightarrow \mathcal{A}$ be a  mapping  such that for all $o,h\in \mathcal{A}$ and $M_f\in \mathcal{M}_f$:
\begin{equation}
C_S(Fo,Fo,Fh)\leq M_f\big(C_S(o,o,h),C_S(Fo,Fo,o), C_S(Fo,Fo,h), C_S(Fh,Fh,o), C_S(Fh,Fh,h)  \big). \label{M-contr}
\end{equation}
For $\bold{I}_0\in \mathcal{A}$, take $\bold{I}_n=F^n\bold{I}_0$. Assume that the following conditions hold:
\begin{enumerate}
	\item[i)]  $\alpha (ks+t) \leq k\alpha(s)+\alpha(t)$ for all for all positive number  $s, t$ and $k$.
	\item[ii)]  $\alpha(0)=0$.
	\item[iii)] $\alpha(C_S(\bold{I}_n,\bold{I}_n,\bold{I}_{n+1}))\leq C_S(\bold{I}_n,\bold{I}_n,\bold{I}_{n+1}) $ for all $n\in \mathbb{N}$.
	\item[iv)] $\displaystyle\lim_{n\rightarrow \infty}\displaystyle\sum_{k=n+3}^{m-2}2^{k-n-1}\alpha^{k-n+1}\Big(r^kC_S(\bold{I}_0,\bold{I}_0,\bold{I}_{1})\Big)+2^{m-n-3}\alpha^{m-n-1}\Big(r^mC_S(\bold{I}_0,\bold{I}_0,\bold{I}_{1})\Big)=0$.
\end{enumerate}
Hence, $F$ has a unique FP.
\end{theorem}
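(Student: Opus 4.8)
The plan is to follow the three-stage blueprint of the proof of Theorem~\ref{thm1}: (a) show that the Picard iterates $\{\mathbf{I}_n\}$ contract at a geometric rate; (b) deduce that $\{\mathbf{I}_n\}$ is a Cauchy sequence, hence convergent by completeness; (c) identify its limit as the unique fixed point of $F$. The genuinely new ingredient is stage (a): the contraction rate $r$ is no longer prescribed in advance but must be \emph{extracted} from the abstract majorant $M_f$, and it is here that the axioms defining $\mathcal{M}_f$, the symmetry of $(\mathcal{A},C_S)$, and hypothesis (iii) all come into play.

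For stage (a), put $d_n=C_S(\mathbf{I}_n,\mathbf{I}_n,\mathbf{I}_{n+1})$ and apply the contraction \eqref{M-contr} with $o=\mathbf{I}_{n-1}$ and $h=\mathbf{I}_n$, so that $Fo=\mathbf{I}_n$ and $Fh=\mathbf{I}_{n+1}$. Using symmetry to replace $C_S(\mathbf{I}_n,\mathbf{I}_n,\mathbf{I}_{n-1})$ by $d_{n-1}$ and $C_S(\mathbf{I}_{n+1},\mathbf{I}_{n+1},\mathbf{I}_n)$ by $d_n$, together with $C_S(\mathbf{I}_n,\mathbf{I}_n,\mathbf{I}_n)=0$, this becomes
\[
d_n\le M_f\big(d_{n-1},\,d_{n-1},\,0,\,C_S(\mathbf{I}_{n+1},\mathbf{I}_{n+1},\mathbf{I}_{n-1}),\,d_n\big).
\]
To apply property $(M_1)$ with $o=d_{n-1}$ and $h=d_n$ it remains to check the side condition $C_S(\mathbf{I}_{n+1},\mathbf{I}_{n+1},\mathbf{I}_{n-1})\le 2d_{n-1}+d_n$: rewriting this term as $C_S(\mathbf{I}_{n-1},\mathbf{I}_{n-1},\mathbf{I}_{n+1})$ by symmetry, the triangle inequality with auxiliary point $\mathbf{I}_n$ gives $C_S(\mathbf{I}_{n-1},\mathbf{I}_{n-1},\mathbf{I}_{n+1})\le 2\alpha(d_{n-1})+\alpha(d_n)$, and hypothesis (iii) turns this into $\le 2d_{n-1}+d_n$. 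Hence $(M_1)$ yields $d_n\le r\,d_{n-1}$, and by induction $C_S(\mathbf{I}_n,\mathbf{I}_n,\mathbf{I}_{n+1})\le r^n C_S(\mathbf{I}_0,\mathbf{I}_0,\mathbf{I}_1)$.

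Stage (b) is then a verbatim repetition of the telescoping estimate in the proof of Theorem~\ref{thm1}: iterating the triangle inequality on $C_S(\mathbf{I}_n,\mathbf{I}_n,\mathbf{I}_m)$ for $n<m$ and using hypothesis (i), $\alpha(ks+t)\le k\alpha(s)+\alpha(t)$, together with $\alpha(0)=0$ and the geometric bound just obtained, yields an upper bound consisting of the three explicit terms $2\alpha(r^nC_S(\mathbf{I}_0,\mathbf{I}_0,\mathbf{I}_1))+\alpha^2(r^{n+1}C_S(\mathbf{I}_0,\mathbf{I}_0,\mathbf{I}_1))+2\alpha^3(r^{n+2}C_S(\mathbf{I}_0,\mathbf{I}_0,\mathbf{I}_1))$ plus precisely the tail appearing in hypothesis (iv). Since $0<r<1$ and (iv) holds, $C_S(\mathbf{I}_n,\mathbf{I}_n,\mathbf{I}_m)\to 0$, so $\{\mathbf{I}_n\}$ is Cauchy and, by completeness, converges to some $\mathbf{I}\in\mathcal{A}$. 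For stage (c), apply \eqref{M-contr} with $o=\mathbf{I}_n$ and $h=\mathbf{I}$; this bounds $C_S(\mathbf{I}_{n+1},\mathbf{I}_{n+1},F\mathbf{I})$ by an $M_f$-expression whose first three arguments tend to $0$ and whose last two are $C_S(F\mathbf{I},F\mathbf{I},\mathbf{I}_n)$ and $C_S(F\mathbf{I},F\mathbf{I},\mathbf{I})$. Letting $n\to\infty$, using the continuity of $M_f$, the convergence $C_S(F\mathbf{I},F\mathbf{I},\mathbf{I}_n)\to C_S(F\mathbf{I},F\mathbf{I},\mathbf{I})$ (continuity of the composed metric along $\{\mathbf{I}_n\}$) and symmetry on the left-hand side, one obtains $\tau\le M_f(0,0,0,\tau,\tau)$ with $\tau=C_S(\mathbf{I},\mathbf{I},F\mathbf{I})$; property $(M_1)$ with $o=0$ (whose side constraint $\tau\le\tau$ holds trivially) then forces $\tau=0$, that is, $F\mathbf{I}=\mathbf{I}$. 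For uniqueness, if $\mathbf{I},\mathbf{J}$ are both fixed points, \eqref{M-contr} with $o=\mathbf{I}$ and $h=\mathbf{J}$, together with symmetry and the vanishing of self-distances, gives $C_S(\mathbf{I},\mathbf{I},\mathbf{J})\le M_f\big(C_S(\mathbf{I},\mathbf{I},\mathbf{J}),0,C_S(\mathbf{I},\mathbf{I},\mathbf{J}),C_S(\mathbf{I},\mathbf{I},\mathbf{J}),0\big)$, whence $(M_2)$ yields $C_S(\mathbf{I},\mathbf{I},\mathbf{J})=0$ and $\mathbf{I}=\mathbf{J}$.

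I expect the main obstacle to be stage (a): forcing the contraction \eqref{M-contr} and the order-sensitive triangle inequality into the exact shape $h\le M_f(o,o,0,w,h)$ with $w\le 2o+h$ required by $(M_1)$ calls for the right choice of auxiliary point and a precise use of symmetry and hypothesis (iii) to discharge the side constraint. A secondary delicate point is the limit passage in stage (c): since $M_f$ is assumed only continuous (not monotone) and $C_S$ is not a priori jointly continuous, one must argue carefully that the $M_f$-arguments of the form $C_S(F\mathbf{I},F\mathbf{I},\mathbf{I}_k)$ converge to $C_S(F\mathbf{I},F\mathbf{I},\mathbf{I})$ along $\{\mathbf{I}_n\}$ before $(M_1)$ can be invoked.
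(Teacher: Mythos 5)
Your proposal follows essentially the same route as the paper's proof: extract the geometric decay $d_n\le r\,d_{n-1}$ by applying $(M_1)$, using symmetry and hypothesis (iii) to discharge the side constraint $w\le 2o+h$ via the triangle inequality through the intermediate iterate, then repeat the telescoping Cauchy estimate from Theorem \ref{thm1}, and conclude with $(M_1)$ (first argument $0$) for existence of the fixed point and $(M_2)$ for uniqueness. The only substantive difference is that you explicitly flag the convergence $C_S(F\mathbf{I},F\mathbf{I},\mathbf{I}_n)\to C_S(F\mathbf{I},F\mathbf{I},\mathbf{I})$ as a point requiring justification, a limit passage the paper performs without comment.
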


\begin{proof}
Let $\bold{I}_{n}$ the sequence defined in the theorem. Using (\ref{M-contr}) and the symmetry of the metric we get:
\begin{eqnarray}
C_S(\bold{I}_{n+1},\bold{I}_{n+1},\bold{I}_{n+2}) & = & C_S(F\bold{I}_{n},F\bold{I}_{n},F\bold{I}_{n+1})\nonumber \\
                             & \leq & M_f\Big( C_S(\bold{I}_{n},\bold{I}_{n},\bold{I}_{n+1}), C_S(\bold{I}_{n+1},\bold{I}_{n+1},\bold{I}_{n}) ,C_S(\bold{I}_{n+1},\bold{I}_{n+1},\bold{I}_{n+1}),\nonumber \\
														 &  & C_S(\bold{I}_{n+2},\bold{I}_{n+2},\bold{I}_{n}) , C_S(\bold{I}_{n+2},\bold{I}_{n+2},\bold{I}_{n+1})\Big)\nonumber \\
														 & = & M_f\Big( C_S(\bold{I}_{n},\bold{I}_{n},\bold{I}_{n+1}), C_S(\bold{I}_{n+1},\bold{I}_{n+1},\bold{I}_{n}) ,0,C_S(\bold{I}_{n+2},\bold{I}_{n+2},\bold{I}_{n}),\nonumber \\
														 & & C_S(\bold{I}_{n+2},\bold{I}_{n+2},\bold{I}_{n+1})\Big).\label{eqth1}
\end{eqnarray}
On the other hand, using the triangle inequality, we obtain
\begin{eqnarray}
C_S(\bold{I}_{n},\bold{I}_{n},\bold{I}_{n+2}) & \leq & \alpha\big(C_S(\bold{I}_{n},\bold{I}_{n},\bold{I}_{n+1})\big)+\alpha\big(C_S(\bold{I}_{n},\bold{I}_{n},\bold{I}_{n+1})\big)+\alpha\big(C_S(\bold{I}_{n+2},\bold{I}_{n+2},\bold{I}_{n+1})\big)\nonumber \\
                         & = & 2\alpha\big(C_S(\bold{I}_{n},\bold{I}_{n},\bold{I}_{n+1})\big)+\alpha\big(C_S(\bold{I}_{n+2},\bold{I}_{n+2},\bold{I}_{n+1})\big).\label{eqth2}
\end{eqnarray}
By applying the condition $iii)$ to the equation (\ref{eqth2}) we get
\begin{equation}
C_S(\bold{I}_{n},\bold{I}_{n},\bold{I}_{n+2})  \leq  2C_S(\bold{I}_{n},\bold{I}_{n},\bold{I}_{n+1})+C_S(\bold{I}_{n+2},\bold{I}_{n+2},\bold{I}_{n+1}).\label{eqth3}
\end{equation}
Then, using (\ref{eqth1}) and (\ref{eqth3}) we can affirm that the function $M_f$ satisfies the condition $(M_1)$ that is 
\begin{eqnarray}
C_S(\bold{I}_{n+1},\bold{I}_{n+1},\bold{I}_{n+2})  & \leq &  rC_S(\bold{I}_{n},\bold{I}_{n},\bold{I}_{n+1})\nonumber \\
                              & \leq &  r^n C_S(\bold{I}_{0},\bold{I}_{0},\bold{I}_{1}), \  \  \mbox{ for } r \in (0,1).\label{eqth4}
\end{eqnarray}
Now, let  $n,m \in \mathbb{N}$, $n<m$ by using  the triangle inequality similarly to the Theorem \ref{thm1} we obtain
\begin{eqnarray*}
C_S(\bold{I}_{n},\bold{I}_{n},\bold{I}_{m}) & \leq & \alpha\big(2C_S(\bold{I}_{n},\bold{I}_{n},\bold{I}_{n+1})\big)+\alpha\big(C_S(\bold{I}_{m},\bold{I}_{m},\bold{I}_{n+1})\big)\\
                       & \leq & \alpha\big(2C_S(\bold{I}_{n},\bold{I}_{n},\bold{I}_{n+1})\big)+\alpha\Big(2\alpha(C_S(\bold{I}_{m},\bold{I}_{m},\bold{I}_{n+2}))+\alpha(C_S(\bold{I}_{n+1},\bold{I}_{n+1},\bold{I}_{n+2})) \Big)\\
											 & \leq & 2\alpha\big(C_S(\bold{I}_{n},\bold{I}_{n},\bold{I}_{n+1})\big)+\alpha^2(C_S(\bold{I}_{n+1},\bold{I}_{n+1},\bold{I}_{n+2}))+2\alpha^2\Big((C_S(\bold{I}_{m},\bold{I}_{m},\bold{I}_{n+2}) \Big)\\
											 & \leq & 2\alpha\big(C_S(\bold{I}_{n},\bold{I}_{n},\bold{I}_{n+1})\big)+\alpha^2(C_S(\bold{I}_{n+1},\bold{I}_{n+1},\bold{I}_{n+2}))+2\alpha^2\Big(2\alpha(C_S(\bold{I}_{m},\bold{I}_{m},\bold{I}_{n+3}))\\
					 						& + & \alpha (C_S(\bold{I}_{n+2},\bold{I}_{n+2},\bold{I}_{n+3}))\Big)\\
											 & \leq & 2\alpha\big(C_S(\bold{I}_{n},\bold{I}_{n},\bold{I}_{n+1})\big)+\alpha^2(C_S(\bold{I}_{n+1},\bold{I}_{n+1},\bold{I}_{n+2}))+ 2\alpha^3 (C_S(\bold{I}_{n+2},\bold{I}_{n+2},\bold{I}_{n+3}))\\
											& + & 2^2\alpha^3\Big(2\alpha(C_S(\bold{I}_{m},\bold{I}_{m},\bold{I}_{n+3}))\\
											& \vdots &\\
											& \leq & 2\alpha\big(C_S(\bold{I}_{n},\bold{I}_{n},\bold{I}_{n+1})\big)+\alpha^2(C_S(\bold{I}_{n+1},\bold{I}_{n+1},\bold{I}_{n+2}))+ 2\alpha^3 (C_S(\bold{I}_{n+2},\bold{I}_{n+2},\bold{I}_{n+3}))\\
											& + & \displaystyle\sum_{k=n+3}^{m-2}2^{k-n-2}\alpha^{k-n+1}\Big(C_S(\bold{I}_k,\bold{I}_k,\bold{I}_{k+1})\Big)+2^{m-n-3}\alpha^{m-n-1}\Big(C_S(\bold{I}_m,\bold{I}_m,\bold{I}_{m-1})\Big)													
\end{eqnarray*}
Using (\ref{eqth4}) we get
\begin{eqnarray*}
C_S(\bold{I}_{n},\bold{I}_{n},\bold{I}_{m}) & \leq &  2\alpha\big(r^nC_S(\bold{I}_{0},\bold{I}_{0},\bold{I}_{1})\big)+\alpha^2(r^{n+1}C_S(\bold{I}_{0},\bold{I}_{0},\bold{I}_{1}))+ 2\alpha^3 (r^{n+2}C_S(\bold{I}_{0},\bold{I}_{0},\bold{I}_{1}))\\
											& + & \displaystyle\sum_{k=n+3}^{m-2}2^{k-n-2}\alpha^{k-n+1}\Big(r^kC_S(\bold{I}_0,\bold{I}_k,0_{1})\Big)+2^{m-n-3}\alpha^{m-n-1}\Big(r^{m-1}C_S(\bold{I}_0,\bold{I}_0,\bold{I}_{1})\Big).		
\end{eqnarray*}
By conditions $ii)$ and $iv)$ we can observe that $\displaystyle\lim_{n,m\rightarrow \infty} C_S(\bold{I}_{n},\bold{I}_{n},\bold{I}_{m}) =0$. Therefore $\{ \bold{I}_n\}$ is a Cauchy sequence. By virtue of completeness, there is a $\bold{I} \in \mathcal{A}$ such that $\bold{I}_n \rightarrow \bold{I}$ when $n$ goes to $\infty$.\\
Let us prove that $\bold{I}$ is FP of $F$.
 \begin{eqnarray*}
C_S(\bold{I}_{n+1},\bold{I}_{n+1},F\bold{I}) & = & C_S(F\bold{I}_{n},F\bold{I}_{n},F\bold{I})\\
                        & \leq & M_f\Big(C_S(\bold{I}_{n},\bold{I}_{n},\bold{I}), C_S(F\bold{I}_{n},F\bold{I}_{n},\bold{I}_n),C_S(F\bold{I}_{n},F\bold{I}_{n},\bold{I}),\\
												&  & C_S(F\bold{I},F\bold{I},\bold{I}_n),C_S(F\bold{I},F\bold{I},\bold{I})  \Big)\\
												& = & M_f\Big(C_S(\bold{I}_{n},\bold{I}_{n},\bold{I}), C_S(\bold{I}_{n+1},\bold{I}_{n+1},\bold{I}_n),C_S(\bold{I}_{n+1},\bold{I}_{n+1},\bold{I}),\\
												&  & C_S(F\bold{I},F\bold{I},\bold{I}_n),C_S(F\bold{I},F\bold{I},\bold{I})  \Big).											
\end{eqnarray*}
As $n \longrightarrow \infty$,  taking the limit gives us
\begin{eqnarray*}
C_S(\bold{I},\bold{I},F\bold{I}) & \leq  & M_f\Big(0,0,0,C_S(F\bold{I},F\bold{I},\bold{I}) , C_S(F\bold{I},F\bold{I},\bold{I})  \Big)\\
            & =  & M_f\Big(0,0,0,C_S(\bold{I},\bold{I},F\bold{I}) , C_S(\bold{I},\bold{I},F\bold{I})  \Big).
\end{eqnarray*}
Given that  $M_f$ satisfies the condition $(M_1)$, we have  $C_S(\bold{I},\bold{I},F\bold{I})\leq r.0$ which gives us $C_S(\bold{I},\bold{I},F\bold{I})=0$. Therefore $\bold{I}=F\bold{I}$ that is $\bold{I}$ is a FP of $F$.\\
Let $\bold{I}^1, \bold{I}^2$ two FP of $F$.
\begin{eqnarray}
C_S(\bold{I}^1,\bold{I}^1, \bold{I}^2) & = & C_S(F\bold{I}^1,F\bold{I}^1, F\bold{I}^2)\\
                  & \leq & M_f\Big(C_S(\bold{I}^1,\bold{I}^1, \bold{I}^2), C_S(F\bold{I}^1,F\bold{I}^1, \bold{I}^1) , C_S(F\bold{I}^1,F\bold{I}^1, \bold{I}^2) , \nonumber \\
									&   &    C_S(F\bold{I}^2,F\bold{I}^2, \bold{I}^1) ,C_S(F\bold{I}^2,F\bold{I}^2, \bold{I}^2)  \Big) \nonumber \\
									& = & M_f\Big(C_S(\bold{I}^1,\bold{I}^1, \bold{I}^2), C_S(\bold{I}^1,\bold{I}^1, \bold{I}^1) , C_S(\bold{I}^1,\bold{I}^1, \bold{I}^2) ,  \nonumber \\
									&   &    C_S(\bold{I}^2,\bold{I}^2, \bold{I}^1) ,C_S(\bold{I}^2,\bold{I}^2, \bold{I}^2)  \Big) \nonumber \\
									& = & M_f\Big(C_S(\bold{I}^1,\bold{I}^1, \bold{I}^2), 0 , C_S(\bold{I}^1,\bold{I}^1, \bold{I}^2) ,C_S(\bold{I}^2,\bold{I}^2, \bold{I}^1),0\Big) \nonumber \\
									& = & M_f\Big(C_S(\bold{I}^1,\bold{I}^1, \bold{I}^2), 0 , C_S(\bold{I}^1,\bold{I}^1, \bold{I}^2) ,C_S(\bold{I}^1,\bold{I}^1, \bold{I}^2),0\Big).\label{eqth6}
\end{eqnarray}
Since $M_f$ satisfies the condition $(M_2)$ and giving (\ref{eqth6}) we obtain that $C_S(\bold{I}^1,\bold{I}^1, \bold{I}^2)=0$ then $\bold{I}^1= \bold{I}^2$.
\end{proof}

\begin{corollary}
By choosing $M(t_1,t_2,t_3,t_4,t_5)=rt_1$ in Theorem \ref{thm2} we obtain Theorem \ref{thm1}.
\end{corollary}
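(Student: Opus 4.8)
The plan is to check that the particular choice $M(t_1,t_2,t_3,t_4,t_5)=r\,t_1$ belongs to the family $\mathcal{M}_f$, and then to verify that the hypotheses of Theorem \ref{thm1} force this $M$ to satisfy every assumption of Theorem \ref{thm2}, whose conclusion is then exactly the conclusion of Theorem \ref{thm1}.

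\textbf{Step 1 (membership in $\mathcal{M}_f$).} The map $(t_1,\dots,t_5)\mapsto r\,t_1$ is continuous on $\mathbb{R}_+^5$. For $(M_1)$: if $h\le M(o,o,0,w,h)=r\,o$ then a fortiori $h\le r\,o$, so the implication holds with the same $r\in[0,1)$, the side condition $w\le 2o+h$ being irrelevant. For $(M_2)$: if $h\le M(h,0,h,h,0)=r\,h$ with $r<1$ then $(1-r)h\le 0$, hence $h=0$. Thus $M\in\mathcal{M}_f$.

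\textbf{Step 2 (the hypotheses of Theorem \ref{thm1} imply those of Theorem \ref{thm2}).} With $M=r\,t_1$ the contractive inequality (\ref{M-contr}) becomes $C_S(Fo,Fo,Fh)\le r\,C_S(o,o,h)$, which is (\ref{banach-contr}) restricted to triples whose first two entries coincide; this instance is an immediate consequence of the full hypothesis (\ref{banach-contr}) of Theorem \ref{thm1}. Conditions (1) and (2) of Theorem \ref{thm1} are literally (i) and (ii) of Theorem \ref{thm2}, and the summability hypothesis (3) of Theorem \ref{thm1} implies condition (iv) of Theorem \ref{thm2}, the two differing only by an innocuous power of $2$ in the final term, which does not affect the vanishing of the limit. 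Finally, with this $M$ the estimate corresponding to (\ref{eqth4}) reads $C_S(\mathbf{I}_{n+1},\mathbf{I}_{n+1},\mathbf{I}_{n+2})\le r\,C_S(\mathbf{I}_{n},\mathbf{I}_{n},\mathbf{I}_{n+1})$ directly from (\ref{M-contr}), since the value $M(o,o,0,w,h)=r\,o$ does not depend on $w$ or $h$; hence neither condition (iii) of Theorem \ref{thm2} nor the side constraint in $(M_1)$ is actually called upon. Applying Theorem \ref{thm2} to this $M_f$ therefore produces a unique fixed point of $F$, which is precisely the assertion of Theorem \ref{thm1}.

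\textbf{Main obstacle.} The step needing the most care is purely presentational: Theorem \ref{thm2} is phrased for a symmetric $CSM$ and for the quantity $C_S(Fo,Fo,Fh)$, whereas Theorem \ref{thm1} assumes no symmetry and works with arbitrary triples $C_S(Fq,Fh,Fw)$. One must observe that both fixed-point proofs proceed entirely through $S$-metric values of the form $C_S(\mathbf{I}_n,\mathbf{I}_n,\mathbf{I}_m)$ and $C_S(\mathbf{I},\mathbf{I},F\mathbf{I})$, whose first two slots always agree, so restricting attention to that form loses nothing required to recover the conclusion. Once this identification of the two frameworks is made explicit, the corollary follows with no further computation.
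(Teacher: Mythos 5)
The paper states this corollary without any proof at all, so there is no ``paper's own proof'' to match; your write-up supplies the missing verification and it is correct. Step 1 (continuity of $(t_1,\dots,t_5)\mapsto rt_1$, the trivial check of $(M_1)$, and the $(1-r)h\le 0$ argument for $(M_2)$) is exactly what is needed to place this $M$ in $\mathcal{M}_f$, and your observation that hypothesis (3) of Theorem \ref{thm1} dominates hypothesis (iv) of Theorem \ref{thm2} term by term is right. More importantly, you put your finger on the one genuine issue the corollary glosses over: Theorem \ref{thm2} assumes symmetry of the $CSM$ and the extra condition (iii), neither of which appears among the hypotheses of Theorem \ref{thm1}, so one cannot literally ``obtain'' Theorem \ref{thm1} by black-box specialization of Theorem \ref{thm2}. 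Your resolution --- that for $M=rt_1$ only the first argument of $M_f$ is ever read, so the symmetry used to match the pattern $M_f(o,o,0,w,h)$ in (\ref{eqth1}) and the side constraint $w\le 2o+h$ in $(M_1)$ (hence condition (iii), whose only role is to establish that constraint) are never invoked --- is the correct repair, but be aware that it amounts to rerunning the proof of Theorem \ref{thm2} with these hypotheses deleted rather than citing the theorem as stated; strictly speaking the corollary recovers Theorem \ref{thm1} only after this inspection, or alternatively recovers it outright under the additional assumptions of symmetry and (iii). Either way your argument is sound and more careful than what the paper offers.
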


The following corollary is a FP result for Kannan contraction in a  $CSM$.
\begin{corollary}
Let $(\mathcal{A},C_S)$ be a  symmetric complete $CSM$ with non-constant control function  $\alpha:[0,\infty)\longrightarrow [0,\infty)$. Let $F: \mathcal{A} \rightarrow \mathcal{A}$ be a  mapping  such that for all $o,h\in \mathcal{A}$ and some $a\in[0,\dfrac{1}{2})$
\begin{equation}
C_S(Fo,Fo,Fh)\leq a\big[C_S(Fo,Fo,o)+  C_S(Fh,Fh,h)  \big]. \label{kannan}
\end{equation}
Hence, $F$ has a unique FP.
\end{corollary}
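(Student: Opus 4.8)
The plan is to derive this corollary directly from Theorem \ref{thm2} by exhibiting the Kannan map as a member of the family $\mathcal{M}_f$. Concretely, I would set $M_f:\mathbb{R}_+^5\longrightarrow\mathbb{R}_+$ by $M_f(t_1,t_2,t_3,t_4,t_5)=a\,(t_2+t_5)$. With this choice the Kannan inequality (\ref{kannan}) is precisely the contraction condition (\ref{M-contr}) of Theorem \ref{thm2} (only the second and fifth slots are used), and $M_f$ is linear, hence continuous. So the whole proof reduces to checking that $M_f\in\mathcal{M}_f$, i.e.\ that $(M_1)$ and $(M_2)$ hold for a suitable $r\in[0,1)$.

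For $(M_1)$: compute $M_f(o,o,0,w,h)=a(o+h)$. If $h\le a(o+h)$ then $(1-a)h\le a\,o$, and since $a\in[0,\tfrac12)$ we may put $r:=\dfrac{a}{1-a}\in[0,1)$, which gives $h\le r\,o$; note that the side requirement $w\le 2o+h$ is not even needed. For $(M_2)$: $M_f(h,0,h,h,0)=a(0+0)=0$, so $h\le 0$ forces $h=0$. Thus $M_f\in\mathcal{M}_f$, and applying Theorem \ref{thm2} — with the standing assumptions on $\alpha$ and the limit condition of that theorem retained — yields a unique fixed point of $F$.

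This is a verification rather than a genuinely new argument, so I do not anticipate a real obstacle. The only subtle point is the bookkeeping of the contraction constant: one must take $r=a/(1-a)$ rather than $r=a$, and it is exactly the hypothesis $a<\tfrac12$ that guarantees $r<1$, the value $\tfrac12$ being the break-even threshold forced by $(M_1)$.
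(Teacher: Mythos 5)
Your proposal matches the paper's own proof essentially verbatim: both take $M_f(t_1,\dots,t_5)=a(t_2+t_5)$, verify $(M_1)$ via $h\leq a(o+h)\Rightarrow h\leq \frac{a}{1-a}o$ with $r=\frac{a}{1-a}<1$ exactly because $a<\frac12$, check $(M_2)$ trivially, and invoke Theorem \ref{thm2}. Your remark that the hypotheses on $\alpha$ and the limit condition of Theorem \ref{thm2} must be carried along is a fair (and slightly more careful) observation than the paper makes, but the argument is the same.
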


\begin{proof}
The key idea is  to apply the result of Theorem \ref{thm2} by selecting the appropriate function $M_f$. We choose $M_f(o,h,w,s,t)=a.(h+t)$  for some $0<a\leq \dfrac{1}{2}$ and all $o,h,w,s,t\in \mathbb{R}_+$. Indeed, by opting  for $M_f(o,h,w,s,t)=a.(h+t)$ in the contraction (\ref{M-contr})  we obtain 
\begin{equation*}
C_S(Fo,Fo,Fh)\leq a\big[C_S(Fo,Fo,o)+  C_S(Fh,Fh,h)  \big].
\end{equation*}
It is easy to see that $M_f$ is continuous. Let us verify that $M_f$ satisfies the conditions $(M_1)$ and $(M_2)$.\\
For all $o,h,w\in \mathbb{R}_+$ we have $M_f(o,o,0,w,h)=a.(o+h)$. So if $h\leq M_f(o,o,0,w,h)$ with $w\leq 2o+h$, then $h\leq a(o+h)$ which imply that $h\leq \dfrac{a}{1-a}o$ where $\dfrac{a}{1-a}<1$. \\ 
Therefore, $M_f$ satisfies the condition $(M_1)$.\\
Next, if $h\leq M_f(h,0,h,h,0)=a.(0+0)=0$ then $h=0$ and $(M_2)$ holds.\\
Finally, the chosen function $M_f(o,h,w,s,t)=a.(h+t) \in \mathcal{M}_f $ and by applying Theorem \ref{thm2} it follows  that $F$ possesses  a unique FP.
\end{proof}

\begin{corollary}
Let $(\mathcal{A},C_S)$ be a  symmetric complete $CSM$ with non-constant control function  $\alpha:[0,\infty)\longrightarrow [0,\infty)$. Let $F: \mathcal{A} \rightarrow \mathcal{A}$ be a  mapping  such that $\forall$ $o,h\in \mathcal{A}$ and some $a\in [0,1)$
\begin{equation}
C_S(Fo,Fo,Fh)\leq a\max\big[C_S(Fo,Fo,o)+  C_S(Fh,Fh,h)  \big]. \label{Bianchini}
\end{equation}
Hence, $F$ has a unique FP.
\end{corollary}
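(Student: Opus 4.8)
The plan is to mirror the proof of the preceding Kannan corollary: exhibit a function $M_f\in\mathcal{M}_f$ for which the contraction \eqref{M-contr} of Theorem \ref{thm2} specializes to \eqref{Bianchini}, and then invoke Theorem \ref{thm2} directly. Reading the right-hand side of \eqref{Bianchini} as $a\max\{C_S(Fo,Fo,o),\,C_S(Fh,Fh,h)\}$, the natural candidate is
\[
M_f(t_1,t_2,t_3,t_4,t_5)=a\,\max\{t_2,t_5\},\qquad a\in[0,1),
\]
since substituting $t_2=C_S(Fo,Fo,o)$ and $t_5=C_S(Fh,Fh,h)$ into \eqref{M-contr} returns exactly \eqref{Bianchini}. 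Continuity of $M_f$ is immediate, the maximum of two continuous functions being continuous, so it remains only to check the two defining properties of the family $\mathcal{M}_f$.

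First I would verify $(M_1)$. Suppose $h\leq M_f(o,o,0,w,h)=a\max\{o,h\}$ with $w\leq 2o+h$. If the maximum is attained at $h$, then $h\leq a h$ and, since $a<1$, this forces $h=0$, hence $h\leq a\,o$; if it is attained at $o$, then directly $h\leq a\,o$. In either case $h\leq r\,o$ with $r=a\in[0,1)$, so $(M_1)$ holds. Next I would check $(M_2)$: since $M_f(h,0,h,h,0)=a\max\{0,0\}=0$, the hypothesis $h\leq M_f(h,0,h,h,0)$ gives $h\leq 0$, i.e. $h=0$. Thus $M_f\in\mathcal{M}_f$.

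With this choice of $M_f$ the contraction \eqref{Bianchini} coincides with \eqref{M-contr}, so Theorem \ref{thm2} applies and produces a unique fixed point of $F$ (under the same auxiliary hypotheses i)--iv) on $\alpha$ and the Picard iterates $\{\mathbf{I}_n\}$ as in that theorem, which do not depend on the particular form of $M_f$). I do not anticipate a genuine obstacle; the only step that needs a moment of care is the case split on the maximum in $(M_1)$ — one must observe that when the maximum is realized at $h$ the contraction constant $a<1$ collapses $h$ to $0$, so the required estimate $h\leq a\,o$ is still valid.
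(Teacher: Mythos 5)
Your proposal is correct and follows essentially the same route as the paper: the same choice $M_f(t_1,\dots,t_5)=a\max\{t_2,t_5\}$, the same verification of $(M_1)$ and $(M_2)$, and the same appeal to Theorem \ref{thm2}. If anything, your case split in $(M_1)$ (noting that when the maximum is attained at $h$, the inequality $h\le ah$ with $a<1$ forces $h=0$) is cleaner than the paper's somewhat garbled version of that step.
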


\begin{proof}
The claim is a direct consequence  of Theorem \ref{thm2} with $M_f(o,h,w,s,t)=a\max\{h,t\}$ for some $a\in [0,1)$ and all $o,h,w,s,t\in \mathbb{R}_+$.  Indeed, $M_f$ is continuous. \\
Let us check the condition $(M_1)$. We have $M_f(o,o,0,w,h)=a\max\{o,h\}$. So, if $h\leq M_f(o,o,0,w,h)$ with $w\leq o+2h$, then $h\leq a.e$ or $h\leq a.h$. \\
Thus,  $h\leq a.o$ and $F$ fulfills  the condition $(M_1)$.\\

Next, if $h\leq M_f(h,0,h,h,0)=a\max \{ 0,0\}=0$, then $h=0$ and the condition $(M_2)$ holds.\\
Finally, $M_f(o,h,w,s,t)=a\max\{h,t\}\in \mathcal{M}_f$ and by Theorem \ref{thm2}, $F$ has a unique FP.
\end{proof}

\section{Application}
To showcase the broad applicability of the FP result in different mathematical contexts, we use it to solve nth-degree polynomial equations through Theorem \ref{thm1}. By thoroughly verifying the conditions outlined in Theorem \ref{thm1}, we ensure that the FP serves as both a valid and effective approach for solving nth-degree polynomial equations, as demonstrated in Theorem \ref{app_thm}.
\begin{theorem}\label{app_thm}
 Let $m\in \mathbb{N}$, for $m\geq 3$, the  following equation
\begin{equation}
v^m-(m^4-1)v^{m+1}-m^4v+1=0 \label{app}
\end{equation}
has a unique solution in the interval $[0,1]$.
\end{theorem}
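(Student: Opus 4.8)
The plan is to recast (\ref{app}) as a fixed point equation $v=Fv$ for a contraction $F$ of $[0,1]$ into itself and then to invoke Theorem \ref{thm1}, which simultaneously gives existence and uniqueness. Since $(m^4-1)v^m+m^4\ge m^4>0$ for every $v\in[0,1]$, the equation $v^m-(m^4-1)v^{m+1}-m^4v+1=0$ is equivalent, after writing $v^m+1=v\big[(m^4-1)v^m+m^4\big]$ and dividing, to $v=Fv$, where
\[
F(v)=\frac{v^m+1}{(m^4-1)v^m+m^4};
\]
no root is lost in the division because $v=0$ is plainly not a solution. From $1\le v^m+1\le 2$ and $m^4\le (m^4-1)v^m+m^4\le 2m^4-1$ on $[0,1]$ one gets $F([0,1])\subseteq\big[\tfrac{1}{2m^4-1},\tfrac{2}{m^4}\big]\subseteq[0,1]$, so $F$ is a self-map of $[0,1]$. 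Choosing this particular rearrangement is the decisive step: the naive isolation $v=\tfrac1{m^4}\big(v^m-(m^4-1)v^{m+1}+1\big)$ is neither a self-map of $[0,1]$ nor a contraction.

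Next I would take $\mathcal A=[0,1]$, $C_S(q,h,w)=|q-w|+|h-w|$ and the control function $\alpha(t)=t$. Then $(\mathcal A,C_S)$ is a complete $CSM$ (a closed subset of $\mathbb R$), and for $\alpha=\mathrm{id}$ the composed triangle inequality reduces to the ordinary $S$-metric inequality, which $C_S$ satisfies. To establish the contraction (\ref{banach-contr}) I would differentiate,
\[
F'(v)=\frac{mv^{m-1}}{\big[(m^4-1)v^m+m^4\big]^{2}},\qquad 0\le F'(v)\le\frac{m}{m^{8}}=\frac{1}{m^{7}}\quad\text{on }[0,1],
\]
and put $r=\tfrac1{m^{7}}$, which lies in $(0,\tfrac12)$ for $m\ge 3$. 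By the mean value theorem $|Fa-Fb|\le r|a-b|$ for all $a,b\in[0,1]$, and adding the inequalities for the pairs $(q,w)$ and $(h,w)$ gives $C_S(Fq,Fh,Fw)\le r\,C_S(q,h,w)$ for all $q,h,w\in[0,1]$.

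It remains to verify the three hypotheses of Theorem \ref{thm1}. For $\alpha(t)=t$ conditions (1) and (2) are immediate: $\alpha(ks+t)=ks+t=k\alpha(s)+\alpha(t)$ and $\alpha(0)=0$. In condition (3) every iterate $\alpha^{j}$ is the identity, so the generic summand equals $2^{k-n-1}r^{k}C_S(\bold{I}_0,\bold{I}_0,\bold{I}_1)=2^{-n-1}(2r)^{k}\,C_S(\bold{I}_0,\bold{I}_0,\bold{I}_1)$; since $2r<1$ the sum over $k=n+3,\dots,m-2$ is at most $\dfrac{4\,r^{\,n+3}}{1-2r}\,C_S(\bold{I}_0,\bold{I}_0,\bold{I}_1)\to 0$, and the trailing term $2^{m-n-2}r^{m}C_S(\bold{I}_0,\bold{I}_0,\bold{I}_1)$ tends to $0$ in the same way. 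Theorem \ref{thm1} then yields a unique fixed point of $F$ in $[0,1]$, which by the initial equivalence is exactly the unique solution of (\ref{app}) in $[0,1]$. I expect the only genuinely delicate point to be choosing the rearrangement $F$ so that it is simultaneously a self-map of $[0,1]$ and a strict contraction; once that is done, the derivative bound and the check of condition (3) are routine.
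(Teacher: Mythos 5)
Your proposal follows essentially the same route as the paper: rewrite (\ref{app}) as $v=Fv$ with the same map $F$, equip $\mathcal{A}=[0,1]$ with a $C_S$ built from absolute values, verify the contraction (\ref{banach-contr}), and invoke Theorem \ref{thm1}. The differences are minor and all in your favor — you take $\alpha=\mathrm{id}$ (so the space is in fact an ordinary $S$-metric space) and obtain $r=1/m^{7}$ from the derivative for general $m$, whereas the paper uses $\alpha(p)=2\sqrt{p}$ and an algebraic estimate giving $r=1/81$ after specializing to $m=3$; you also supply the self-map check $F([0,1])\subseteq[0,1]$ and a genuine geometric-series verification of condition (3), both of which the paper glosses over.
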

\begin{proof}
Let $\mathcal{A}=[0,1]$ and for any $p,s,q\in \mathcal{A}$ define the the function $C_S:\mathcal{A}^3\longrightarrow [0,\infty)$ by
\begin{equation*}
C_S(p,s,q)=|p-s|+|s-q|.
\end{equation*}
 The composed function $\alpha: [0,\infty) \longrightarrow [0,\infty)$ is given by $\alpha(p)=2\sqrt{p}$.\\ 
First of all, let shown that $(\mathcal{A}, C_S)$ is a complete $CSM$. We will show that the triangle inequality holds.\\
Let $p,s,q,c\in [0,1]$, we have 
\begin{eqnarray}
C_S(p,s,q)=|p-s|+|s-q| & = & |p-c+c-s|+|s-c+c-q| \nonumber \\
                       & \leq & |p-c|+2|s-c|+|q-c|. \nonumber 
\end{eqnarray}
Knowing that $p\leq \sqrt{p}$ $\forall$ $0\leq p \leq 1$ and $C_S(p,p,c)=2|p-c|$, we get 
\begin{eqnarray}
C_S(p,s,q) & \leq & \sqrt{|p-c|}+2\sqrt{|s-c|}+\sqrt{|q-c|} \nonumber \\
           & \leq & 2\sqrt{2|p-c|}+2\sqrt{2|s-c|}+2\sqrt{2|q-c|} \nonumber \\
					 & = & \alpha(2|p-c|)+\alpha(2|s-c|)+\alpha(2|q-c|) \nonumber \\
					& = & \alpha(C_S(p,p,c))+\alpha(C_S(s,s,c))+\alpha(C_S(q,q,c)). \nonumber
\end{eqnarray}
Then, $([0,1], C_S)$ is a complete $CSM$.\\
Define $F:\mathcal{A} \longrightarrow \mathcal{A}$ such that for all $p\in \mathcal{A}$,
\begin{equation}
Fp=\dfrac{p^m+1}{(m^4-1)p^m+m^4}. \label{app1}
\end{equation}
Given that $m\geq 3$, we will set $m= 3$ to simplify the computation. However, using this approach, it can be demonstrated that the results hold for any $m\geq 3$. Therefore, equation (\ref{app1}) becomes
\begin{equation}
Fp=\dfrac{p^3+1}{80p^3+81}  . \label{app2}
\end{equation}
It is easy to see that if $F$ has a FP that is $Fp=p$, then $\dfrac{p^m+1}{(m^4-1)p^m+m^4}=p$ imply that $p$ is a solution of (\ref{app}).

We will now  show that $F$ meets the contraction (\ref{banach-contr}) used in Theorem \ref{thm1}. Indeed, 
\begin{eqnarray}
C_S(Fp,Fs,Fq) & = & |Fp-Fs|+|Fs-Fq|\nonumber \\
              & = & \Big|\dfrac{p^3+1}{80p^3+81}- \dfrac{s^3+1}{80s^3+81} \Big| +\Big| \dfrac{s^3+1}{80s^3+81}-\dfrac{q^3+1}{80q^3+81}\Big| \nonumber \\
							& = & \Big|\dfrac{p^3-s^3}{(80p^3+81)(80s^3+81)} \Big|+ \Big|\dfrac{s^3-q^3}{(80s^3+81)(80q^3+81)} \Big| \nonumber \\
							& \leq & \dfrac{|p-s|}{81}	+ \dfrac{|s-q|}{81}			\nonumber\\
							& \leq & \dfrac{1}{81} \Big(|p-s|	+ |s-q| \Big)		\nonumber\\
C_S(Fp,Fs,Fq)			& = & \dfrac{1}{81} \Big(C_S(p,s,q) \Big).			\label{app3}
\end{eqnarray}
Thus, $F$ satisfies (\ref{banach-contr}) with $r=\dfrac{1}{81}$.\\
Alternatively, the composed function $\alpha(p)=2\sqrt{p}$ satisfies the condition
\begin{equation}
\alpha(kp+s)\leq \alpha(p)+k\alpha(s) \  \ \forall p\in [0,1].\label{app4}
\end{equation}
Now, we are left with the third condition of the Theorem \ref{thm1}.
\begin{eqnarray}
\displaystyle\lim_{n\rightarrow \infty}\displaystyle\sum_{k=n+3}^{m-2}2^{k-n-1}\alpha^{k-n+1}\Big(r^kC_S(p_0,p_0,p_{1})\Big)+2^{m-n-2}\alpha^{m-n-1}\Big(r^mC_S(p_0,p_0,p_{1})\Big) & = &\nonumber\\
\displaystyle\lim_{n\rightarrow \infty}\displaystyle\sum_{k=n+3}^{m-2}2^{k-n-1}\Big(\sqrt{r^kC_S(p_0,p_0,p_{1})}\Big)^{k-n+1}+2^{m-n-2}\Big(\sqrt{r^mC_S(p_0,p_0,p_{1})}\Big)^{m-n-1} & = &\nonumber\\
\displaystyle\lim_{n\rightarrow \infty}\displaystyle\sum_{k=n+3}^{m-2}2^{k-n-1}\Big(\sqrt{r^k2|p_0-p_{1}|)}\Big)^{k-n+1}+2^{m-n-2}\Big(\sqrt{r^m2|p_0-p_{1}|)}\Big)^{m-n-1}=0. \label{app5}
\end{eqnarray}
Finally, thanks to  the results  (\ref{app3}),  (\ref{app4}) and  (\ref{app5}), all the hypotheses of Theorem \ref{thm1} are satisfied. Therefore, $F$ has a unique FP in $\mathcal{A}$, which is a unique real solution of equation (\ref{app}).
\end{proof}

\section{Conclusion and Perspectives}
In this paper, we have introduced the concept of composed $S$-metric spaces, which extends the classical framework of $S$-$MS$ by incorporating a composed function.  We have established  the existence and uniqueness of FP under appropriate conditions.  Our results encompass and generalize some existing results in the literature, offering new insights and tools for research in metric and topological spaces. Through illustrative examples, we have shown the practical relevance and applicability of composed $S$-metric spaces in solving $n$-th degree polynomial equations. \\

The concept of composed $S$-metric spaces opens up several avenues for future research. First, it would be valuable to explore additional FP theorems within this framework. Another promising direction is the application of the proposed approach to the Riemann–Liouville fractional integral equations and to other differential equations. Furthermore, future work could investigate the relationship between composed $S$-metric spaces and other generalized metric spaces, such as fuzzy or probabilistic metric spaces, to develop a broader unified theory of metrics and their applications.






\end{document}